\newtheorem{theorem}{Theorem}
\newtheorem{lemma}[theorem]{Lemma}
\newtheorem{remark}[theorem]{Remark}
\numberwithin{theorem}{section}
\numberwithin{figure}{section}
\numberwithin{equation}{section}
\DeclareMathOperator{\SLE}{SLE}
\begin{document}
\title{A note on the GFF with one free boundary condition}
\author{Yong Han\footnote{College of Mathematics and Statistics, Shenzhen University, Shenzhen 518060, Guandong, China. hanyong@szu.edu.cn}
  \hspace{1cm} Yuefei Wang\footnote{College of Mathematics and Statistics, Shenzhen University, Shenzhen 518060, Guandong, China and Institute of Mathematics, Academy of Mathematics and Systems Sciences, Chinese Academy of
Sciences, Beijing 100190, China. wangyf@math.ac.cn}
\hspace{1cm} Zipeng Wang \footnote{College of Mathematics and Statistics, Chongqing University, Chongqing,
401331, China. zipengwang@cqu.edu.cn}
}
\date{}
\maketitle

\newcommand{\HH}{\mathbb{H}}

\newcommand{\R}{\mathbb{R}}
\newcommand{\C}{\mathbb{C}}
\newcommand{\N}{\mathbb{N}}
\newcommand{\Z}{\mathbb{Z}}
\newcommand{\E}{\mathbb{E}}
\newcommand{\PP}{\mathbb{P}}

\newcommand{\one}{\mathbb{1}}
\newcommand{\bn}{\mathbf{n}}
\newcommand{\MR}{MR}
\newcommand{\cond}{\,|\,}

\newcommand{\prob}{\mathbb{P}}

\begin{abstract}
In this note, we shall prove an explicit formula on the probability of the level line of the Gaussian Free Field (GFF) with mixed boundary condition terminating at the free boundary, which generalizes the results of GFF with Dirichlet boundary condition.
\end{abstract}

\section{Background}
Let us first recall some basic knowledge on Loewner chains (\cite{LawlerConformallyInvariantProcesses, LawlerSchrammWernerExponent1,RohdeSchrammSLEBasicProperty}).
Given a continuous function $W$ from $\R_+$ to $\R$: $t\rightarrow W_t$, the Loewner's  ordinary differential equation (ODE) is defined by
\begin{align}
\partial_t g_t(z)=\frac{2}{g_t(z)-W_t},\quad g_0(z)=z\in\HH.
\end{align}
For $z\in\overline{\HH}\setminus\{0\}$, let $\tau(z)$ be the blowup time of the Loewner's ODE above.
For $t\geq 0$, let $K_t$ be the set of $z\in\HH$ with $\tau(z)\leq t$. Then the collection $\{K_t: t\in\R_+\}$ forms an increasing family of compact subsets of $\overline{\HH}$ such that  for each $t$, $g_t$ is the unique conformal map from $\HH\setminus K_t$ onto $\HH$ satisfying $g_t(z)=z+2t/z+O(1/|z|^2)$ as $z\rightarrow \infty$. The conformal maps $(g_t)_{t\geq 0}$ (or the compact hulls $(K_t)_{t\geq 0}$ ) are called the Loewner chain driven by $W$.  For $\kappa>0$ and $B_t$ being the one dimensional standard Brownian motion, the random Loewner chain driven by $W_t=\sqrt{\kappa}B_t$ is called the Schramm Loewner Evolution ($\mathrm{SLE}_\kappa$) process. The $\mathrm{SLE}_\kappa$ process is generated by a curve $\gamma$ from $0$ to $\infty$, i.e., for each $t\geq 0$, $\HH\setminus K_t$ is the unbounded component of $\HH\setminus\gamma[0, t]$.  There is an variant of $\mathrm{SLE}_\kappa$ which is known as the $\mathrm{SLE}_\kappa(\rho)$ process. For $\rho\geq 0$, and $y\leq x\in\R$,  consider the following Stochastic differential equation  system:
\begin{align}\label{eqn::kapparho}
dW_t=\sqrt{\kappa}dB_t+\frac{\rho}{W_t-V_t}dt,\quad dV_t=\frac{2}{V_t-W_t},\quad W_0=x,\quad V_0=y.
\end{align}
The random Loewner chain driven by $W$ defined in \eqref{eqn::kapparho} is called the $\mathrm{SLE}_\kappa(\rho)$ process from $x$ to $\infty$ with force point $y$. More properties of $\mathrm{SLE}_\kappa(\rho)$ process can be found in \cite{MillerSheffieldIG1}.

Next, let us give a brief introduction to the GFF (\cite{DuplantierSheffieldLQGKPZ,MillerSheffieldIG1}).
For $a\in \R$, the Green function with Dirichlet boundary condition on $(a, \infty)$ and Neumann boundary  condition (also called free boundary condition ) on $(-\infty, a)$ is given by
\begin{align}
G_{\mathrm{mix}}(z, w):=\frac{1}{2\pi}\Re\left(\frac{\left(\sqrt{z-a}+\sqrt{w-a}\right)\left(\sqrt{z-a}-\overline{\sqrt{w-a}}\right)}{\left(\sqrt{z-a}-\sqrt{w-a}\right)\left(\sqrt{z-a}+\overline{\sqrt{w-a}}\right)}\right),\quad z, w\in\HH,
\end{align}
where we take the branch of the square root such that it takes value in the upper half plane.

The GFF with Dirichlet boundary condition on $(a, \infty)$ and Neumann boundary  condition (also called free boundary condition ) on $(-\infty, a)$ is a centered Gaussian process  $\Gamma$ indexed by the set of continuous functions with compact support in $\HH$ such that
\[
\E\left[\Gamma(f)\Gamma(g)\right]=\int_{\HH\times\HH}f(z)G_{\mathrm{mix}}(z, w)g(w)dzdw.
\]

Notice that when $a\rightarrow-\infty$, $\Gamma$ is reduced to the GFF with Dirichlet boundary condition.
In \cite{Peltola2017global} the authors gave a full description of the  crossing probabilities of all possible patterns of the level lines of the  GFF with alternating boundary conditions. In particular, they gave a formula of the probability that the level line of the GFF originated  from one given point terminates at another  given point ((1.8) of \cite[ Theorem 1.4]{Peltola2017global} ). In this note we will use martingale methods to  deal with the GFF with the mixture of one free boundary data and Dirichlet boundary data and give a formula corresponding to (1.8) of  \cite{Peltola2017global}.
\section{Results}
Given $n+1$ points $a<b_1<b_2<...<b_n$. Let $\phi$ be the harmonic function on $\HH$ satisfying the Neumann boundary condition on $(-\infty, a)$ and taking values $\pm \lambda$ alternatively in the intervals $(b_i, b_{i+1})$ for $i=0,1,2,...,n$, where we take $b_0=a, b_{n+1}=+\infty$ and $\lambda=\sqrt{\pi/8}$ (see the following figure).
\begin{center}
\includegraphics[scale=0.6]{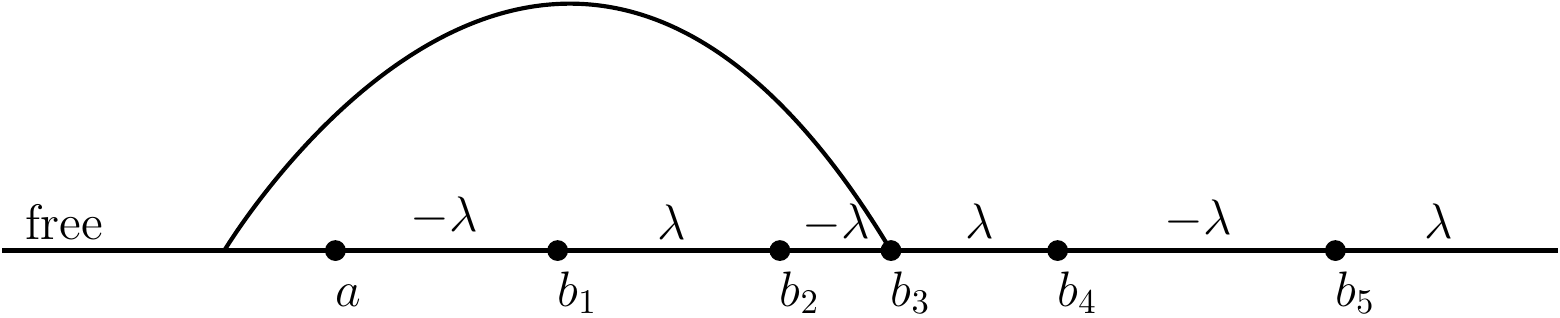}
\end{center}
The GFF (denoted by $h$) with free boundary condition on $(\infty, a)$ and alternating boundary conditions $\pm\lambda$ on $(b_i, b_{i+1})$ for $i=1,2,..., n$ is defined as the Gaussian process $\Gamma+\phi$, where $\Gamma$ is defined as the previous section.

Fix $k\in\{1,2,3,..., n\}$, let $I_k$ be the subset of $\{1, 2, 3,..., n\}$ that consists of numbers having the same parity as $k$ and $J_k=\{1,2,3,...,n\}\setminus I_k$. In \cite[Proposition 6]{IzyurovKytolaHadamardSLEFreeField}, the authors  gave a  coupling between the GFF in the strip with arbitrary mixed boundary condition which can be stated as following:
\begin{theorem}
There exists a coupling between $h$ and the random Loewner chain 
started from $b_k$
driven by the following  SDE systems:
\begin{align}\label{eqn::general_driven}
\begin{cases}
db_k(t)=2dB_t+\frac{-1}{b_k(t)-a(t)}dt+\sum_{i\in I_k}F(b_k(t), b_i(t), a(t))dt-\sum_{i\in J_k} F(b_k(t), b_j(t), a(t))dt\\[4mm]
da(t)=\frac{2}{a(t)-b_k(t)}dt,\quad db_i(t)=\frac{2}{b_i(t)-b_k(t)}dt,\quad \forall i\neq k,
\end{cases}
\end{align}
where $F$ is defined as
\begin{align}\label{eqn::special_Function}
F(x, y, z):=\frac{2}{x-y}\sqrt{\frac{y-z}{x-z}},\quad z<x, z<y.
\end{align}
\end{theorem}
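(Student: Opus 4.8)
The plan is to use the martingale method of Schramm and Sheffield (as developed for the free field by Dub\'edat and by Miller and Sheffield). Write $W_t := b_k(t)$ for the driving function and $V_t := a(t)$ for the image of the free endpoint, and let $g_t$ be the associated Loewner map. I would first construct the candidate conditional mean: let $\tilde\phi_t$ be the harmonic function on $\HH$ satisfying the Neumann condition on $(-\infty, a(t))$, taking the alternating values $\pm\lambda$ on the images $(b_i(t), b_{i+1}(t))$ of the Dirichlet arcs, and having a jump of size $2\lambda$ across the tip $W_t$; then set $M_t(z) := \tilde\phi_t(g_t(z))$ for a fixed $z \in \HH$. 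By the Schramm--Sheffield coupling criterion it suffices to verify two statements: (i) for each fixed $z$, the process $t \mapsto M_t(z)$ is a continuous local martingale under the system \eqref{eqn::general_driven}; and (ii) the quadratic covariation satisfies $d\langle M(z), M(w)\rangle_t = -\,\partial_t G^{t}_{\mathrm{mix}}(z,w)\,dt$, where $G^{t}_{\mathrm{mix}}$ denotes the mixed Green function pulled back by $g_t$. Condition (i) supplies the conditional-mean structure, and (ii) guarantees that the randomness lost from the field is exactly absorbed by $M_t$, so that conditionally on $\mathcal{F}_t$ the field remains a mixed-boundary GFF in $\HH \setminus K_t$.

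The key to writing $\tilde\phi_t$ explicitly is to treat the free arc by the square-root folding map $\zeta \mapsto \sqrt{\zeta - a(t)}$, which reflects $\HH$ across the Neumann boundary and converts the mixed problem into a pure Dirichlet problem on a doubled domain; this is precisely the device encoded in the formula for $G_{\mathrm{mix}}$. In these coordinates each Dirichlet marked point $b_i(t)$ produces a symmetric pair of image charges $\pm\sqrt{b_i(t) - a(t)}$, so $\tilde\phi_t$ becomes a finite sum of arg-type harmonic functions of the form $\arg\!\big(\sqrt{g_t(z) - a(t)} \mp \sqrt{b_i(t) - a(t)}\big)$, with coefficients $\pm\lambda/\pi$ fixed by the alternating boundary data and $\lambda = \sqrt{\pi/8}$.

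The main computation, and the step I expect to be the chief obstacle, is the drift cancellation in (i). Differentiating $M_t(z) = \tilde\phi_t(g_t(z))$ by It\^o's formula, using $\partial_t g_t(z) = 2/(g_t(z) - W_t)$ together with the SDEs for $W_t, V_t$ and the $b_i(t)$, one finds three kinds of contributions to the $dt$-drift: the second-order It\^o term at the tip (which, because $dW_t = 2\,dB_t$ corresponds to $\kappa = 4$, is the source of the level-line behavior), the term coming from the motion of the free endpoint $a(t)$, and one term for each interaction with a Dirichlet point $b_i(t)$. I expect the endpoint drift to be cancelled exactly by the $-1/(b_k(t) - a(t))$ term in the driving SDE, and the interaction drift with $b_i(t)$ to be cancelled by $\pm F(b_k(t), b_i(t), a(t))$, with the square-root factor in $F(x,y,z) = \frac{2}{x-y}\sqrt{\tfrac{y-z}{x-z}}$ emerging as precisely the derivative produced by the folded geometry and the sign ($+$ for $i \in I_k$, $-$ for $i \in J_k$) matching the alternating $\pm\lambda$ values. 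Verifying that all of these cancel simultaneously, while checking that the Neumann condition is preserved along the flow (which should be automatic from the reflection construction), is the heart of the argument.

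Finally, once the drift vanishes, $M_t(z)$ is a continuous local martingale whose diffusion coefficient is read off from the tip term; a parallel but deterministic computation, using the Loewner-flow derivative of $G_{\mathrm{mix}}$, then confirms the quadratic-covariation identity (ii). Invoking the Schramm--Sheffield coupling theorem with (i) and (ii) in hand produces the desired coupling between $h$ and the Loewner chain driven by \eqref{eqn::general_driven}, which completes the proof.
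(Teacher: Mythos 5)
You should first be aware that the paper itself offers no proof of this statement: it is imported verbatim as \cite[Proposition 6]{IzyurovKytolaHadamardSLEFreeField}, so there is no in-paper argument to compare against. That said, your outline is precisely the strategy of the cited source (and of the Schramm--Sheffield/Dub\'edat/Miller--Sheffield couplings generally): build the candidate conditional mean $\tilde\phi_t\circ g_t$, reduce the mixed boundary problem to a Dirichlet one via the folding map $\zeta\mapsto\sqrt{\zeta-a(t)}$ so that each $b_i(t)$ acquires a mirror charge at $-\sqrt{b_i(t)-a(t)}$, check the local-martingale property and the quadratic-covariation identity against the Green function of the slit domain, and invoke the coupling criterion. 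Your identification of the $-1/(b_k(t)-a(t))$ term as the $\rho=-1$ interaction with the folding point and of $F$ as the folded-coordinate interaction with the image pairs is the right reading of the SDE.

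The gap is that the decisive step is announced rather than carried out. The entire content of the theorem --- the specific coefficient $-1$ in front of $1/(b_k(t)-a(t))$, the exact form $F(x,y,z)=\frac{2}{x-y}\sqrt{\frac{y-z}{x-z}}$, and the sign pattern over $I_k$ versus $J_k$ --- is produced by the simultaneous cancellation of all drift terms in It\^o's formula applied to $\sum_i \pm\frac{\lambda}{\pi}\arg\bigl(\sqrt{g_t(z)-a(t)}\mp\sqrt{b_i(t)-a(t)}\bigr)$, and you only state that you ``expect'' each piece to cancel. Until that computation is done one cannot even confirm that the drift in \eqref{eqn::general_driven} is the correct one (as opposed to, say, a different coefficient on the $a(t)$ term), so as written this is a proof plan rather than a proof. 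A secondary point worth one sentence in a complete write-up: the ``Schramm--Sheffield coupling criterion'' you invoke is standard for the zero-boundary Dirichlet GFF, but here the field has a Neumann arc, so you must either verify that the criterion (conditional mean a local martingale plus $d\langle M(z),M(w)\rangle_t=-dG_t(z,w)$ with $G_t$ the \emph{mixed} Green function of the slit domain) still characterizes a coupling for this Gaussian structure, or cite the version of it proved in the mixed-boundary setting; this is exactly what Izyurov and Kyt\"ol\"a do.
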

Now we state our main result.
\begin{theorem}\label{thm::main}
The Loewner chain driven by $b_k(t)$ in \eqref{eqn::general_driven} is almost surely generated by a simple curve $\gamma[0,T]\rightarrow\overline{\HH}$ such that
\begin{align}
\gamma(0)=b_k,\quad \gamma(0, T)\subset\HH,\quad\gamma(T)=b_i \text{ for some } i\in J_k \text{ or }\gamma(T)\in (-\infty, a).
\end{align}
Moreover, we have
\begin{align}\label{eqn::martingale}
\mathbb{P}(\gamma(T)\in (-\infty, a))=\prod_{i\in I_k, i<k}\frac{1+\sqrt{\frac{b_i-a}{b_k-a}}}{1-\sqrt{\frac{b_i-a}{b_k-a}}}\prod_{i\in I_k, i>k}\frac{1+\sqrt{\frac{b_k-a}{b_i-a}}}{1-\sqrt{\frac{b_k-a}{b_i-a}}}\prod_{i\in J_k, i<k}\frac{1-\sqrt{\frac{b_i-a}{b_k-a}}}{1+\sqrt{\frac{b_i-a}{b_k-a}}}\prod_{i\in J_k, i>k}\frac{1-\sqrt{\frac{b_k-a}{b_i-a}}}{1+\sqrt{\frac{b_k-a}{b_i-a}}}.
\end{align}
\end{theorem}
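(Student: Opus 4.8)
The plan is to run the martingale observable method at $\kappa=4$ (the driving term $2\,dB_t$ in \eqref{eqn::general_driven} corresponds to $\kappa=4$, consistent with $\lambda=\sqrt{\pi/8}$). Abbreviate the running coordinates by $x=b_k(t)$, $y_i=b_i(t)$ for $i\neq k$ and $z=a(t)$, and introduce the conformally natural variables
\[
\beta_i=\sqrt{b_i(t)-a(t)},\qquad u_i=\tfrac{\beta_i}{\beta_k}\ (i<k),\qquad u_i=\tfrac{\beta_k}{\beta_i}\ (i>k),
\]
all lying in $(0,1)$ since the ordering $a<b_1<\dots<b_n$ is preserved by the flow until the relevant points are swallowed. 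Define $M_t$ to be the right-hand side of \eqref{eqn::martingale} evaluated at the running coordinates, i.e.
\[
M_t=\prod_{i\in I_k}\frac{1+u_i}{1-u_i}\ \prod_{j\in J_k}\frac{1-u_j}{1+u_j},
\]
so that $M_0$ is the claimed probability. The proof then reduces to showing that $M_t$ is a bounded martingale with terminal value $\mathbf{1}[\gamma(T)\in(-\infty,a)]$, after which optional stopping gives $\mathbb{P}(\gamma(T)\in(-\infty,a))=\mathbb{E}[M_0]=M_0$. The qualitative part of the statement---that $\gamma$ is simple and terminates either at some $b_i$ with $i\in J_k$ or on $(-\infty,a)$---I would read off from $\mathrm{SLE}_4(\underline{\rho})$ theory: expanding $F(x,y,z)=\tfrac{2}{x-y}\sqrt{(y-z)/(x-z)}$ near each force point shows an effective weight $\rho=+2$ at each $b_i$ with $i\in I_k$, $\rho=-2$ at each $b_j$ with $j\in J_k$, and $\rho=-1$ at the free-boundary point $a$; since $\kappa/2-2=0$ at $\kappa=4$, the curve is simple, is repelled from the $I_k$-points, and may hit the $J_k$-points or the free boundary.

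The heart of the argument is the local martingale property. Because the Brownian motion enters only through $x=b_k(t)$, the process $M_t=\Psi(x,\underline{y},z)$ is a martingale exactly when $\Psi$ annihilates the generator
\[
2\,\partial_x^2\Psi+\Big(-\tfrac{1}{x-z}+\sum_{i\in I_k}F(x,y_i,z)-\sum_{j\in J_k}F(x,y_j,z)\Big)\partial_x\Psi+\sum_{i\neq k}\tfrac{2}{y_i-x}\,\partial_{y_i}\Psi+\tfrac{2}{z-x}\,\partial_z\Psi=0,
\]
the coefficient $2=\kappa/2$ coming from $d\langle b_k\rangle=4\,dt$. Writing $\log\Psi=\sum_i\pm\big(\log(1+u_i)-\log(1-u_i)\big)$ turns $\partial_x^2\Psi/\Psi=(\partial_x\log\Psi)^2+\partial_x^2\log\Psi$ into a rational expression in the $u_i$, and the square root in $F$ is precisely what is needed for the change of variables $\beta_i=\sqrt{b_i-z}$ to make the first-order operators act cleanly. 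Indeed, passing to the $\beta$-coordinates is the slickest route: a direct Itô computation gives $d\beta_k=\beta_k^{-1}\,dB_t+(\cdots)\,dt$ and $d\beta_i=\tfrac{\beta_i}{(\beta_i^2-\beta_k^2)\beta_k^2}\,dt$ for $i\neq k$, the Itô correction of $\beta_k=\sqrt{x-z}$ cancelling against the drift produced jointly by the $-1/(x-z)$ term and $dz=2(z-x)^{-1}\,dt$, so that no $\beta_k^{-3}$ singularity survives. The vanishing of the drift of $M_t$ then becomes an elementary (if lengthy) algebraic identity among the $u_i$, which I regard as a routine verification.

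It remains to establish boundedness and the terminal value. For boundedness I would use interlacing: for $i<k$ the $u_i$ increase in $i$ and for $i>k$ they decrease in $i$, while the parities---hence the factor types $\tfrac{1+u}{1-u}$ (for $I_k$) and $\tfrac{1-u}{1+u}$ (for $J_k$)---alternate, the neighbours $b_{k\pm1}$ of the seed always lying in $J_k$. Pairing each $J_k$-factor with the adjacent $I_k$-factor on the side farther from the seed (where the corresponding $u$ is smaller) and using that $u\mapsto\tfrac{1-u}{1+u}$ is decreasing shows every such pair has product $\le1$, with any unpaired factor being of $J_k$-type; hence $M_t\in[0,1]$ and $M_t$ is a genuine bounded martingale. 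For the terminal value, on $\{\gamma(T)=b_j,\ j\in J_k\}$ one has $u_j\to1$, so the corresponding $J_k$-factor $\tfrac{1-u_j}{1+u_j}\to0$ and $M_T=0$; on $\{\gamma(T)\in(-\infty,a)\}$ one has $\beta_k\to0$, forcing $u_i\to0$ (factor $\to1$) for every $i>k$, and leaving only the $i<k$ factors to analyse.

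The main obstacle is precisely this last limit. When $\gamma$ terminates on the free boundary it simultaneously swallows $a$ together with all $b_i$, $i<k$, so several $\beta_i$ degenerate at once and the surviving $I_k$- and $J_k$-factors produce competing $\infty$ and $0$ contributions that must be shown to cancel to the value $M_T=1$. To organise these simultaneous degenerations I would exploit the doubling map $\Phi(\zeta)=\sqrt{\zeta-a}$, which sends $\HH$ onto the first quadrant, the free boundary $(-\infty,a)$ onto the imaginary axis, and $b_i\mapsto\beta_i$; reflecting across the imaginary axis doubles the quadrant to $\HH$, turns the Neumann boundary into an interior line, and realises $h$ as a Dirichlet GFF with the $2n$ boundary points $\pm\beta_i$. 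Under this identification $\tfrac{1+u_i}{1-u_i}=\tfrac{\beta_k+\beta_i}{\beta_k-\beta_i}=\tfrac{\beta_k-(-\beta_i)}{\beta_k-\beta_i}$ is a genuine cross-ratio factor, hitting the free boundary becomes a specific connection pattern in the doubled picture, and \eqref{eqn::martingale} matches $(1.8)$ of \cite{Peltola2017global}. I would use this reflection both to guess and to cross-check the limit $M_T=1$, controlling the degenerating factors through the harmonic measure of the swallowed arc, and only then invoke optional stopping to conclude.
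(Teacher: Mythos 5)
Your overall strategy coincides with the paper's: take the observable $\tilde M_t=g(a(t),b_1(t),\dots,b_n(t))$ (your $M_t$), show it is a bounded martingale whose terminal value is the indicator $\mathbf{1}[\gamma(T)\in(-\infty,a)]$, and apply optional stopping. Your interlacing/pairing argument for $M_t\in[0,1]$ is in fact more explicit than anything in the paper, which merely asserts uniform boundedness. However, there are two genuine gaps in your treatment of the terminal values. First, on $\{\gamma(T)=b_{i_0}\}$ with $i_0\in J_k$, $i_0<k$, it is not enough to observe that the single factor $\frac{1-u_{i_0}}{1+u_{i_0}}$ vanishes: all of $b_{i_0}(t),\dots,b_{k-1}(t)$ collapse onto $b_k(t)$ simultaneously, so every $u_i$ with $i_0\le i\le k-1$ tends to $1$ and the $I_k$-factors $\frac{1+u_i}{1-u_i}$ blow up at the same time; the product is a priori indeterminate. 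The paper resolves this by pairing consecutive indices and using the identity
\begin{align*}
\frac{1-u_j}{1+u_j}\cdot\frac{1+u_{j+1}}{1-u_{j+1}}=\left(\frac{1+u_{j+1}}{1+u_j}\right)^2\frac{b_k(t)-b_j(t)}{b_k(t)-b_{j+1}(t)},
\end{align*}
together with the fact that the last ratio tends to $1$, leaving one unpaired factor $\frac{1-u_{k-1}}{1+u_{k-1}}\rightarrow 0$. You need this (or your own pairing pushed through the degeneration) to conclude $M_T=0$; boundedness alone does not force the limit to be $0$.

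Second, on the free-boundary event you correctly flag the competing degenerations as the main obstacle, but you only offer a plan (doubling map plus harmonic-measure control of the swallowed arc) rather than a proof. The precise input required is that $\frac{b_i(t)-a(t)}{b_k(t)-a(t)}\rightarrow 0$ for every $i<k$ as $t\rightarrow T$, i.e.\ the images of the swallowed points $b_i$ collapse onto $a(t)$ strictly faster than $b_k(t)$ does; the paper imports exactly this as Lemma B.2 of Peltola--Wu, after which every factor tends to $1$. Without such an estimate the $i<k$ factors are again indeterminate. A smaller point: your qualitative statement (the chain is generated by a simple curve terminating in $\{b_i:i\in J_k\}\cup(-\infty,a)$) rests on a heuristic reading of effective $\rho$-weights near the force points, whereas the paper makes it rigorous by exhibiting the law of \eqref{eqn::general_driven} as $\mathrm{SLE}_4(-1)$ weighted by an explicit local martingale $Z_t^{1/4}N_t$ and using absolute continuity up to stopping times $T_n$; that Girsanov step is also what rules out accumulation at the $I_k$-points and at $b_k$.
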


We begin the proof with the following lemma.
\begin{lemma}
Suppose that $(g_t)_{t\geq 0}$ is $\mathrm{SLE}_4(-1)$ process from  $b_k$ to $\infty$ with force point $a$.  Define $b_j(t):=g_t(b_j)$ for $j\neq k$ and $a(t)=g_t(a)$.
Let $F$ be the function defined as \eqref{eqn::special_Function},
\begin{align}\label{eqn::Jtt}
J_t:=\sum_{i\in I_k}F(b_k(t), b_i(t), a(t))dt-\sum_{i\in J_k} F(b_k(t), b_j(t), a(t))dt,
\end{align}
and
\[
Z_t:=\left(\prod_{i\in I_k}\frac{\sqrt{b_i(t)-a(t)}-\sqrt{b_k(t)-a(t)}}{\sqrt{b_i(t)-a(t)}+\sqrt{b_k(t)-a(t)}}\prod_{j\in J_k}\frac{\sqrt{b_j(t)-a(t)}+\sqrt{b_k(t)-a(t)}}{\sqrt{b_j(t)-a(t)}-\sqrt{b_k(t)-a(t)}}\right)^2.
\]
Then $\log (Z_t)$ is a local martingale and $d\log (Z_t)=2J_t dB_t$.
\end{lemma}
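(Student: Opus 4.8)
The plan is to prove the statement by a direct It\^o computation after the change of variables $u_j(t):=\sqrt{b_j(t)-a(t)}$, which turns every factor of $Z_t$ into a linear expression in the $u_j$'s. First I would record the driving equations: since $(g_t)$ is the $\SLE_4(-1)$ process from $b_k$ to $\infty$ with force point $a$, Equation \eqref{eqn::kapparho} with $(\kappa,\rho)=(4,-1)$ gives $db_k=2\,dB_t-\tfrac{1}{b_k-a}\,dt$ and $da=\tfrac{2}{a-b_k}\,dt$, while the Loewner flow gives $db_j=\tfrac{2}{b_j-b_k}\,dt$ for $j\neq k$. Subtracting, one finds $d(b_k-a)=2\,dB_t+\tfrac{1}{b_k-a}\,dt$ and, after combining fractions, $d(b_j-a)=\tfrac{2(b_j-a)}{(b_j-b_k)(b_k-a)}\,dt$ for $j\neq k$.

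The crucial step is applying It\^o's formula to the square roots, where the special parameter values enter. For $u_k=\sqrt{b_k-a}$ the first-order drift $\tfrac12(b_k-a)^{-1/2}\tfrac{1}{b_k-a}$ and the It\^o correction $-\tfrac18(b_k-a)^{-3/2}\,d\langle b_k-a\rangle=-\tfrac18(b_k-a)^{-3/2}\cdot 4\,dt$ cancel exactly; this cancellation is precisely the identity $\kappa=4(\rho+2)$ specialized to $(\kappa,\rho)=(4,-1)$, and it leaves the driftless equation $du_k=\tfrac{1}{u_k}\,dB_t$ with $d\langle u_k\rangle=u_k^{-2}\,dt$. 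For $j\neq k$ there is no martingale part, and using $b_j-b_k=u_j^2-u_k^2$ and $b_k-a=u_k^2$ one obtains $du_j=\alpha_j\,dt$ with $\alpha_j=\tfrac{u_j}{(u_j^2-u_k^2)u_k^2}$.

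Next I would write $\tfrac12\log Z_t=\sum_{i\in I_k}\bigl(\log|u_i-u_k|-\log|u_i+u_k|\bigr)-\sum_{j\in J_k}\bigl(\log|u_j-u_k|-\log|u_j+u_k|\bigr)$; the absolute values cause no trouble since the Loewner flow preserves the order of the marked points, so the signs of $u_j\pm u_k$ stay constant and do not enter the differential. Because $\log$ turns the product into a sum, no cross terms arise and it suffices to treat each summand. For fixed $j\neq k$ the two displays above give
\begin{align*}
d\bigl(\log|u_j-u_k|-\log|u_j+u_k|\bigr)
&=-\frac{1}{u_k}\Bigl(\frac{1}{u_j-u_k}+\frac{1}{u_j+u_k}\Bigr)\,dB_t\\
&\quad+\Bigl[\alpha_j\Bigl(\frac{1}{u_j-u_k}-\frac{1}{u_j+u_k}\Bigr)-\frac{1}{2u_k^2}\Bigl(\frac{1}{(u_j-u_k)^2}-\frac{1}{(u_j+u_k)^2}\Bigr)\Bigr]\,dt .
\end{align*}
Using $u_j^2-u_k^2$ as common denominator, the martingale coefficient collapses to $-\tfrac{2u_j}{u_k(u_j^2-u_k^2)}$, which is exactly $F(b_k,b_j,a)=\tfrac{2}{b_k-b_j}\sqrt{\tfrac{b_j-a}{b_k-a}}$, while the two drift contributions both equal $\tfrac{2u_j}{u_k(u_j^2-u_k^2)^2}$ and hence cancel. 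Thus each summand is a driftless local martingale, so $\tfrac12\log Z_t$ has zero drift and martingale part $\sum_{i\in I_k}F(b_k,b_i,a)\,dB_t-\sum_{j\in J_k}F(b_k,b_j,a)\,dB_t=J_t\,dB_t$ with $J_t$ as in \eqref{eqn::Jtt}, giving $d\log Z_t=2J_t\,dB_t$.

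The computation is essentially mechanical once the substitution is in place; the only genuine obstacles are (i) spotting that $u_j=\sqrt{b_j-a}$ is the right coordinate, since it both linearizes $Z_t$ and forces the two drift terms to reduce to the same rational function, and (ii) keeping track of the signs and the $I_k/J_k$ splitting. I expect no analytic difficulty: all quantities are smooth away from the collision times $u_j=u_k$ (i.e.\ $b_j=b_k$) and $u_k=0$ (i.e.\ $b_k=a$), which is exactly why the statement asserts only a \emph{local} martingale; genuine integrability up to $T$ would be handled separately where the lemma is applied.
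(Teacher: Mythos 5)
Your proof is correct and follows essentially the same route as the paper: both expand $\tfrac{1}{2}\log Z_t$ as a sum of logarithms of the individual factors and verify by a direct It\^o computation that the drift terms cancel while the martingale part sums to $J_t\,dB_t$. Your substitution $u_j=\sqrt{b_j(t)-a(t)}$, together with the observation that $u_k$ is itself a driftless local martingale, is a bookkeeping device that streamlines (and, via the absolute values, slightly cleans up) the same calculation rather than a genuinely different argument.
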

\begin{proof}
By \cite[Theorem 1.3]{MillerSheffieldIG1} $\mathrm{SLE}_4(-1)$ is generated by a simple curve. So ${b_j(t):j\neq k}$ and $a(t)$ are well-defined.
By definition we have
\begin{align*}
\frac{1}{2}\log(Z_t)=&\sum_{i\in I_k}\log\left(\sqrt{b_i(t)-a(t)}-\sqrt{b_k(t)-a(t)}\right)-\log\left(\sqrt{b_i(t)-a(t)}+\sqrt{b_k(t)-a(t)}\right)\\
+&\sum_{j\in J_k}\log\left(\sqrt{b_j(t)-a(t)}+\sqrt{b_k(t)-a(t)}\right)-\log\left(\sqrt{b_j(t)-a(t)}-\sqrt{b_k(t)-a(t)}\right).
\end{align*}
So  from It\^{o}'s formula, we have
\begin{align*}
d\log (Z_t)=&\sum_{i\in I_k}\frac{-2}{(b_i(t)-b_k(t))(b_k(t)-a(t))}\sqrt{\frac{b_i(t)-a(t)}{b_k(t)-a(t)}}dt+\frac{2}{b_k(t)-b_i(t)}\sqrt{\frac{b_i(t)-a(t)}{b_k(t)-a(t)}}\left(2dB_t+\frac{-1}{b_k(t)-a(t)}\right)\\
+&\sum_{i\in J_k}\frac{2}{(b_i(t)-b_k(t))(b_k(t)-a(t))}\sqrt{\frac{b_i(t)-a(t)}{b_k(t)-a(t)}}dt+\frac{-2}{b_k(t)-b_i(t)}\sqrt{\frac{b_i(t)-a(t)}{b_k(t)-a(t)}}\left(2dB_t+\frac{-1}{b_k(t)-a(t)}\right)\\
=&\sum_{i\in I_k}\frac{2}{b_k(t)-b_i(t)}\sqrt{\frac{b_i(t)-a(t)}{b_k(t)-a(t)}}2dB_t+\sum_{i\in J_k}\frac{-2}{b_k(t)-b_i(t)}\sqrt{\frac{b_i(t)-a(t)}{b_k(t)-a(t)}}2dB_t\\
=&2J_tdB_t.
\end{align*}
\end{proof}
Now define $N_t:=\exp\{-\frac{1}{8}\int_0^t J_s^2ds\}$, where  $J_t$ is defined as  \eqref{eqn::Jtt}.
And it can be checked that $M_t:=Z_t^{\frac{1}{4}}N_t$ is a local martingale up to the first time $T$ that one of $\{a, b_1,...,b_{k-1}, b_{k+1},...,b_n\}$ is swallowed and $dM_t=\frac{1}{2}M_tJ_t dB_t$. In fact,
\begin{align}
T:=\inf\left\{t: b_k(t)-a(t)=0\text{ or } b_k(t)-g_t(b_i)=0 \text{ for some } i\neq k\right\}.
\end{align}
Now define
\begin{align}
T_n^{1}:=\inf\left\{t: \left|\frac{b_k(t)-a(t)}{b_i(t)-a(t)}\right|\leq \frac{1}{n}\text { or }\left|\frac{b_k(t)-a(t)}{b_i(t)-a(t)}\right|\geq 1-\frac{1}{n} \text{ for some }i>k \right\}.
\end{align}
\begin{align}
T_n^{2}:=\inf\left\{t: \left|\frac{b_i(t)-a(t)}{b_k(t)-a(t)}\right|\leq \frac{1}{n}\text { or }\left|\frac{b_i(t)-a(t)}{b_k(t)-a(t)}\right|\geq 1-\frac{1}{n} \text{ for some }i<k \right\}.
\end{align}
And $T_n:=T_n^{1}\wedge T_n^{2}$. Then we have $(M_t: 0\leq t\leq T_n)$ is a bounded martingale for any $n$. Since the Loewner chain driven by \eqref{eqn::general_driven} can be obtained by weighting $\SLE_4(-1)$ with the local martingale $M_t$. We can see that  the Loewner chain driven by \eqref{eqn::general_driven} is absolutely continuous with respect to $\SLE_4(-1)$ up to $T_n$ and therefore  the Loewner chain driven by \eqref{eqn::general_driven} is generated by a continuous curve up to $T_n$. Since $n$ is arbitrary, we have the Loewner chain driven by \eqref{eqn::general_driven} is generated by a continuous curve up to $T$, which we will denote by $\gamma$. We need to show that as $t\rightarrow T$,   $\gamma(t)$ converges almost surely.  Notice that when $\gamma(t)$ accumulates at $\{b_i: i\in I_k\}$, the local martingale is uniformly bounded and therefore it is absolutely continuous with respect to $\SLE_4(-1)$ up to and include $T$. But $\SLE_4(-1)$ is a continuous curve from $b_k$ to $(-\infty, a)$. So $\gamma$ can not accumulate at $\{b_i: i\in I_k\}$. The same reason excludes the event that $\gamma$ accumulates at $b_k$. So almost surely as $t\rightarrow T$, $\gamma(t)$ accumulates at $(\infty, a)\cup \{b_i: i\in J_k \}$.

Let $\mathcal{E}$ be the event that $\gamma$ has accumulation point in $(-\infty, a)$. Then on the event $\mathcal{E}$, $M_t$ is bounded and $\gamma$ is absolutely continuous with respect to $\SLE_4(-1)$ and therefore is a continuous curve from $b_k$ to $(-\infty, a)$.
We have finished the first part of Theorem \ref{thm::main}.

Now we prove \eqref{eqn::martingale}. Define
\begin{align}
g(a, b_1, b_2,..., b_n)=\prod_{i\in I_k, i<k}\frac{1+\sqrt{\frac{b_i-a}{b_k-a}}}{1-\sqrt{\frac{b_i-a}{b_k-a}}}\prod_{i\in I_k, i>k}\frac{1+\sqrt{\frac{b_k-a}{b_i-a}}}{1-\sqrt{\frac{b_k-a}{b_i-a}}}\prod_{i\in J_k, i<k}\frac{1-\sqrt{\frac{b_i-a}{b_k-a}}}{1+\sqrt{\frac{b_i-a}{b_k-a}}}\prod_{i\in J_k, i>k}\frac{1-\sqrt{\frac{b_k-a}{b_i-a}}}{1+\sqrt{\frac{b_k-a}{b_i-a}}}.
\end{align}
One can check that $\tilde{M}_t:=g(a(t), b_1(t), b_2(t),...,b_n(t))$ is a local martingale up to $T$.
\begin{lemma}
Take the notations as above,  $(\tilde{M}_t: 0\leq t\leq T)$ is a martingale and we have $\lim_{t\rightarrow T}\tilde{M}_t=1$ if $\gamma(T)\in (-\infty, a)$; $\lim_{t\rightarrow T}\tilde{M}_t=0$ otherwise.
\end{lemma}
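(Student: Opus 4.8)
The plan is to combine three ingredients: a localization that turns $\tilde M$ into a genuine bounded martingale, an identification of the almost sure limit $\tilde M_T:=\lim_{t\to T}\tilde M_t$ as the indicator of $\mathcal E$, and a uniform integrability statement letting optional stopping pass to the limit. Since $\tilde M_t=g(a(t),\dots,b_n(t))$ is a product of strictly positive factors (before any swallowing each ratio $\sqrt{(b_i(t)-a(t))/(b_k(t)-a(t))}$ or its reciprocal lies in $(0,1)$), we have $\tilde M_t>0$, and the stopped process $(\tilde M_{t\wedge T_n})$ is bounded: on $[0,T_n]$ every factor is pinched into a compact subset of $(0,\infty)$ by the definition of $T_n^{1},T_n^{2}$, so it is a bounded, hence true, martingale with $\E[\tilde M_{t\wedge T_n}]=\tilde M_0$. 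Everything then reduces to identifying $\lim_{t\to T}\tilde M_t$ on the two disjoint events furnished by the first part of Theorem~\ref{thm::main}, namely $\{\gamma(T)=b_i,\ i\in J_k\}$ and $\mathcal E=\{\gamma(T)\in(-\infty,a)\}$, and to ruling out escape of mass.

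On the event $\{\gamma(T)=b_i\}$ with $i\in J_k$ the tip approaches $b_i$, so $b_i(t)-b_k(t)\to 0$ and the single factor of $g$ attached to $i$ (either $\tfrac{1-u_i}{1+u_i}$ if $i<k$ or $\tfrac{1-v_i}{1+v_i}$ if $i>k$, where $u_i=\sqrt{(b_i(t)-a(t))/(b_k(t)-a(t))}$ and $v_i=\sqrt{(b_k(t)-a(t))/(b_i(t)-a(t))}$) tends to $0$. By the first part of the theorem the curve accumulates at the single point $b_i$, so all other marked points stay separated from one another and from $a$; every remaining factor thus has a finite nonzero limit and $\tilde M_t\to 0$.

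The essential case is $\mathcal E$, where the tip hits the free boundary and hence $b_k(t)-a(t)\to 0$; I must show that every factor of $g$ tends to $1$, i.e.\ that all ratios degenerate. For $i>k$ this is immediate, since $b_i(t)-a(t)$ stays bounded below while $b_k(t)-a(t)\to0$, so $v_i\to0$. The delicate ratios are those with $i<k$, where $a,b_1,\dots,b_{k-1},b_k$ collapse together and one must prove the ordering $R_i:=\tfrac{b_i(t)-a(t)}{b_k(t)-a(t)}\to0$. To establish this I would work under $\SLE_4(-1)$ (to which the law is mutually absolutely continuous up to and including $T$ on $\mathcal E$, since $M_t$ is bounded there), using that the force point is only $a$, so that for each fixed $i<k$ the triple $(a(t),b_i(t),b_k(t))$ forms a closed system. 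Writing $\delta=b_k(t)-a(t)$ and $\eta_i=b_i(t)-a(t)$, one computes $d\delta=2\,dB_t+\delta^{-1}dt$ and $d\eta_i=2\eta_i\big((\eta_i-\delta)\delta\big)^{-1}dt$, and after the time change $ds=\delta^{-2}\,dt$ these give
\begin{align*}
\log\delta_s=\log\delta_0+2\tilde B_s-s,\qquad
d\log R_i=-2\,d\tilde B+\frac{R_i+1}{R_i-1}\,ds,
\end{align*}
with $\tilde B$ a Brownian motion in the $s$-clock. The first identity shows that the free-boundary termination corresponds to $s\to\infty$; in the second, the drift $\tfrac{R_i+1}{R_i-1}$ is bounded above by $-1$ on $(0,1)$, so $\log R_i\le\log R_i(0)-2\tilde B_s-s\to-\infty$ and hence $R_i\to0$. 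Consequently every factor of $g$ converges to $1$ and $\tilde M_t\to1$ on $\mathcal E$, and this conclusion transfers to the original law by absolute continuity.

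Finally, to upgrade to a true martingale on the closed interval and read off the probability, I would let $n\to\infty$ in $\E[\tilde M_{T_n}]=\tilde M_0$. By the two previous paragraphs $\tilde M_{T_n}\to\one_{\mathcal E}$ almost surely, and the only way the identity $\E[\one_{\mathcal E}]=\tilde M_0$ could fail is through loss of mass where $\tilde M$ blows up, which happens only when the curve approaches $\{b_i:i\in I_k\}$; the first part of Theorem~\ref{thm::main} excludes this almost surely, so $(\tilde M_{t\wedge T_n})_n$ is uniformly integrable and $\tilde M$ is a genuine martingale with $\tilde M_T=\one_{\mathcal E}$. I expect the main obstacle to be precisely the collapse estimate $R_i\to0$ for $i<k$ on $\mathcal E$ — pinning down the relative rates at which the cluster $a,b_1,\dots,b_{k-1}$ contracts compared with the approach of $b_k$ — together with the care needed to transfer this almost sure statement across the change of measure right up to the terminal time $T$.
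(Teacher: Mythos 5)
Your treatment of the event $\mathcal E$ is correct and in fact more self-contained than the paper's: where the paper simply cites Lemma B.2 of Peltola--Wu to obtain $R_i=(b_i(t)-a(t))/(b_k(t)-a(t))\to 0$ for $i<k$, you derive it directly from the $\SLE_4(-1)$ dynamics. I checked your computation: with $\delta=b_k(t)-a(t)$ and the clock $ds=\delta^{-2}dt$ one indeed gets $d\log R_i=-2\,d\tilde B_s+\frac{R_i+1}{R_i-1}\,ds$, the drift is $\le -1$ on $R_i\in(0,1)$, and $\log\delta_s=\log\delta_0+2\tilde B_s-s$ shows termination on $(-\infty,a)$ corresponds to $s\to\infty$; so $R_i\to 0$ and each factor tends to $1$. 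This is a nice replacement for the external citation, modulo the same absolute-continuity transfer that the paper also uses.

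However, your argument for the case $\gamma(T)=b_{i_0}$, $i_0\in J_k$, has a genuine gap. You assert that only the single factor attached to $i_0$ degenerates and that ``all other marked points stay separated from one another and from $a$.'' That is false: if $i_0<k$, the curve from $b_k$ to $b_{i_0}$ together with the segment $[b_{i_0},b_k]$ encloses every $b_j$ with $i_0<j<k$, so all these points are swallowed at the same time $T$ and $b_j(t)-b_k(t)\to 0$, hence $R_j\to 1$, for every $i_0\le j\le k$. Since $J_k\cap\{i<k\}=\{k-1,k-3,\dots\}$, the terminal point $b_{i_0}$ is generically well below $b_{k-1}$ and this range is nonempty: the factors with $j\in I_k$, $i_0\le j<k$ blow up to $+\infty$ while those with $j\in J_k$, $i_0\le j<k$ tend to $0$, so the limit of $\tilde M_t$ is an indeterminate product $\infty\cdot 0$. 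The paper resolves this by pairing consecutive indices and using
\begin{align*}
\frac{1-\sqrt{R_j}}{1+\sqrt{R_j}}\cdot\frac{1+\sqrt{R_{j+1}}}{1-\sqrt{R_{j+1}}}
=\left(\frac{1+\sqrt{R_{j+1}}}{1+\sqrt{R_j}}\right)^{2}\frac{b_k(t)-b_j(t)}{b_k(t)-b_{j+1}(t)},
\end{align*}
together with $\frac{b_k(t)-b_j(t)}{b_k(t)-b_{j+1}(t)}\to 1$, so that each pair has a finite limit and the unpaired factor at $j=k-1$ drives the product to $0$. Without this cancellation you get neither $\tilde M_t\to 0$ on this event nor the boundedness you invoke for uniform integrability: your claim that $\tilde M$ can blow up ``only when the curve approaches $\{b_i:i\in I_k\}$'' fails for exactly the same reason, since individual factors of $\tilde M$ blow up as the curve approaches $b_{i_0}$ whenever some $I_k$-point lies between $b_{i_0}$ and $b_k$.
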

\begin{proof}
Suppose that  $\gamma(T)=b_{i_0}$ for some $i_0\in J_k$. Without lost of generality, we may assume $b_{i_0}<b_k$. Then as $t\rightarrow T$ we have $b_j(t)-b_k(t)\rightarrow 0$ for any $i_0\leq j\leq k$. Hence we have $\prod_{i\in J_k, i>k}\frac{1-\sqrt{\frac{b_k(t)-a(t)}{b_i(t)-a(t)}}}{1+\sqrt{\frac{b_k(t)-a(t)}{b_i(t)-a(t)}}}$ tends to some finite value, so are
$$
\prod_{i\in I_k, i>k}\frac{1+\sqrt{\frac{b_k(t)-a(t)}{b_i(t)-a(t)}}}{1-\sqrt{\frac{b_k(t)-a(t)}{b_i(t)-a(t)}}},\quad \prod_{i\in I_k, i<i_0}\frac{1+\sqrt{\frac{b_i(t)-a(t)}{b_k(t)-a(t)}}}{1-\sqrt{\frac{b_i(t)-a(t)}{b_k(t)-a(t)}}},\quad \text{ and } \prod_{i\in J_k, i<i_0}\frac{1-\sqrt{\frac{b_i(t)-a(t)}{b_k(t)-a(t)}}}{1+\sqrt{\frac{b_i(t)-a(t)}{b_k(t)-a(t)}}}.
$$
We are left to investigate
\[
I:=\prod_{i\in I_k, i_0\leq i<k}\frac{1+\sqrt{\frac{b_i(t)-a(t)}{b_k(t)-a(t)}}}{1-\sqrt{\frac{b_i(t)-a(t)}{b_k(t)-a(t)}}}\prod_{i\in J_k, i_0\leq i<k}\frac{1-\sqrt{\frac{b_i(t)-a(t)}{b_k(t)-a(t)}}}{1+\sqrt{\frac{b_i(t)-a(t)}{b_k(t)-a(t)}}}.
\]
We can put the products in pairs $(i_0, i_0+1), (i_0+2, i_0+3),..., (k-3, k-2), (k-1)$. For each pair $(j, j+1)$, we have
\[
\frac{1-\sqrt{\frac{b_j(t)-a(t)}{b_k(t)-a(t)}}}{1+\sqrt{\frac{b_j(t)-a(t)}{b_k(t)-a(t)}}}\cdot\frac{1+\sqrt{\frac{b_{j+1}(t)-a(t)}{b_k(t)-a(t)}}}{1-\sqrt{\frac{b_{j+1}(t)-a(t)}{b_k(t)-a(t)}}}=\left(\frac{1+\sqrt{\frac{b_{j+1}(t)-a(t)}{b_k(t)-a(t)}}}{1+\sqrt{\frac{b_j(t)-a(t)}{b_k(t)-a(t)}}}\right)^2\frac{b_k(t)-b_j(t)}{b_k(t)-b_{j+1}(t)}
\]
Notice that  as $t\rightarrow T$
\[
\frac{b_k(t)-b_j(t)}{b_k(t)-b_{j+1}(t)}\rightarrow 1 \quad\forall i_0\leq j<k-1.
\]
So
\[
\lim_{t\rightarrow T}\prod_{i\in I_k, i_0\leq i<k}\frac{1+\sqrt{\frac{b_i(t)-a(t)}{b_k(t)-a(t)}}}{1-\sqrt{\frac{b_i(t)-a(t)}{b_k(t)-a(t)}}}\prod_{i\in J_k, i_0\leq i<k}\frac{1-\sqrt{\frac{b_i(t)-a(t)}{b_k(t)-a(t)}}}{1+\sqrt{\frac{b_i(t)-a(t)}{b_k(t)-a(t)}}}=\lim_{t\rightarrow T}\frac{1-\sqrt{\frac{b_{k-1}(t)-a(t)}{b_k(t)-a(t)}}}{1+\sqrt{\frac{b_{k-1}(t)-a(t)}{b_k(t)-a(t)}}}=0.
\]
We get  $\tilde{M_t}\rightarrow 0$ as $t\rightarrow T$ when $\gamma(T)=b_{i_0}$ for some $i_0\in J_k$.

Now suppose that we have $\gamma(T)\in (-\infty, a)$. Then as $t\rightarrow T$, $a(t)-b_k(t)\rightarrow 0$ and $b_i(t)-b_k(t)\rightarrow 0$ for $i<k$. So
\[
\prod_{i\in J_k, i>k}\frac{1-\sqrt{\frac{b_k(t)-a(t)}{b_i(t)-a(t)}}}{1+\sqrt{\frac{b_k(t)-a(t)}{b_i(t)-a(t)}}}\cdot \prod_{i\in I_k, i>k}\frac{1+\sqrt{\frac{b_k(t)-a(t)}{b_i(t)-a(t)}}}{1-\sqrt{\frac{b_k(t)-a(t)}{b_i(t)-a(t)}}}\rightarrow 1.
\]
By Lemma B.2 of  \cite{Peltola2017global}, we have  as $t\rightarrow T$,
\[
\frac{b_i(t)-a(t)}{b_k(t)-a(t)}\rightarrow 0,\quad \forall i<k.
\]
Therefore if $\gamma(T)\in (-\infty, a)$,  $\tilde{M}_t\rightarrow 1$ as $t\rightarrow T$. So $(\tilde{M}_t: 0\leq t\leq T)$ is a uniformly bounded martingale and by the stopping theorem we can get \eqref{eqn::martingale}.
\end{proof}
\begin{remark}
Since $k$ is arbitrary, we can get all the probabilities  that the level line started from $b_k$ terminates at the free boundary arc $(-\infty, a)$. Let $a$ goes to $\infty$, we can get the crossing probabilities of the GFF with Dirichlet boundary condition which is the Theorem 1.4 of \cite{Peltola2017global}. For example, take $k=1$ and $n=2N-1$, we have
\[
g(a, b_1, b_2,..., b_n)=\prod_{i\in I_k, i>k}\frac{1+\sqrt{\frac{b_k-a}{b_i-a}}}{1-\sqrt{\frac{b_k-a}{b_i-a}}}\prod_{i\in J_k, i>k}\frac{1-\sqrt{\frac{b_k-a}{b_i-a}}}{1+\sqrt{\frac{b_k-a}{b_i-a}}}.
\]
By taking $a\rightarrow\infty$,  the limit  above is equal to
\[
\frac{b_2-b_1}{b_3-b_1}\cdot\frac{b_4-b_1}{b_5-b_1}\cdot...\cdot\frac{b_{2N-2}-b_1}{b_{2N-1}-b_1},
\]
which is equation $(1.8)$ of \cite{Peltola2017global}.
\end{remark}
\section*{Acknowledgement}
 Y.Han and Y.Wang are supported in part by NSF of China (Grants No. 11688101) and Z.Wang is supported by NSF of China (Grants No.11601296)

\end{document}